\documentclass[12pt]{amsart}
%\documentclass[10pt,a4paper]{article}

%%%%%%%%%%%%%%%%%%%Formating page
\textwidth=36cc
\baselineskip 16pt
\textheight 620pt
\headheight 20pt
\headsep 10pt
\topmargin 0pt
\footskip 40pt
\parskip 0pt
\oddsidemargin 10pt
\evensidemargin 10pt

\usepackage{enumerate}
\usepackage{etex}
\reserveinserts{28}
\usepackage[latin1]{inputenc}
\usepackage{amsmath}
\usepackage{amsfonts}
\usepackage{amssymb}
\usepackage{mathtools}
\usepackage{color}                    % For creating coloured text and background
\usepackage{hyperref}                 % For creating hyperlinks in cross references
\usepackage[T1]{fontenc}
%\usepackage{lipsum}
%\usepackage{eucal}
%\usepackage{wrapfig}
%\usepackage{pstricks}
%\usepackage{pst-plot}
%\usepackage{pgfplots}
%\usepackage{url}
%\usepackage{chngcntr}
%\usepackage{cancel}
%\DeclareMathAlphabet{\mathpzc}{OT1}{pzc}{m}{it}
%\DeclareMathAlphabet{\mathcal}{OMS}{cmsy}{m}{n}

%\usepackage{refcheck}
%\newtheorem{THEO}{Theorem}
%\newtheorem{PROP}[THEO]{Proposition}
%\newtheorem{CORO}[THEO]{Corollary}
%\newtheorem{LEMM}[THEO]{Lemma}
%\newtheorem{SUBL}[THEO]{Sublemma}
%\newtheorem{DEFI}[THEO]{Definition}
%\newtheorem{OBSE}[THEO]{Observation}
%\newtheorem{EXAM}[THEO]{Example}
%\newtheorem{CONJ}[THEO]{Conjecture}
%\newtheorem{EXER}[THEO]{Exercise}
%\newtheorem{QUES}[THEO]{Question}
%\newtheorem{AXIO}[THEO]{Axiom}
%\newtheorem{AFIR}[THEO]{Afirmation}
%\newtheorem{NOTE}[THEO]{Note}
%\newtheorem{REMA}[THEO]{Remark}
%\newtheorem{CLAI}[THEO]{Claim}
%%\newtheorem{RECO}[THEO]{Recordatorio}
%\newtheorem{PROB}[THEO]{Problem}
%%\newtheorem*{ESBO}{Esbozo de la demostraciÃ³n}
%\newtheorem{GOAL}[THEO]{Goal}
\newtheorem{thm}{Theorem}[section]
\newtheorem{cor}[thm]{Corollary}
\newtheorem{lem}[thm]{Lemma}
\newtheorem{prop}[thm]{Proposition}

\theoremstyle{definition}
\newtheorem{defn}[thm]{Definition}

\newtheorem{prob}[thm]{Problem}

\numberwithin{equation}{section}

\newtheorem{theoremA}{Theorem} 
\newtheorem{theoremB}{Theorem}
\newtheorem{theoremC}{Theorem}

\def\({\left(}
\def\){\right)}

\def\N{\mathbb{ N}}

%%%%%%%%%%%%%Boolean algebras

%%%%%%%%%%%%%%%%%%others by GP

%%%%%%%%%%%%%%%%%%%

\def\1{\textbf{1}}

\def\supp{\operatorname{supp}}

\def\spn{\operatorname{span}}

\newcommand{\norm}[1]{\left\lVert#1\right\rVert}

%opening
\title[Linear versus lattice embeddings]{Linear versus lattice embeddings between Banach lattices}
%\title[]{Relations between the lattice and the linear structure of Banach lattices}

\author[Avil\'es]{Antonio Avil\'es}
\address[Avil\'es]{Universidad de Murcia, Departamento de Matem\'{a}ticas, Campus de Espinardo 30100 Murcia, Spain
	\newline
	\href{https://orcid.org/0000-0003-0291-3113}{ORCID: \texttt{0000-0003-0291-3113} } }
\email{\texttt{avileslo@um.es}}

\author[Mart\'inez-Cervantes]{Gonzalo Mart\'inez-Cervantes}
\address[Mart\'inez-Cervantes]{Universidad de Murcia, Departamento de Matem\'{a}ticas, Campus de Espinardo 30100 Murcia, Spain
	\newline
	\href{http://orcid.org/0000-0002-5927-5215}{ORCID: \texttt{0000-0002-5927-5215} } }	
\email{gonzalo.martinez2@um.es}

\author[Rueda Zoca]{Abraham Rueda Zoca}
\address[Rueda Zoca]{Universidad de Murcia, Departamento de Matem\'{a}ticas, Campus de Espinardo 30100 Murcia, Spain
	\newline
	\href{https://orcid.org/0000-0003-0718-1353}{ORCID: \texttt{0000-0003-0718-1353} }}
\email{\texttt{abraham.rueda@um.es}}
\urladdr{\url{https://arzenglish.wordpress.com}}

\author[P. Tradacete]{Pedro Tradacete}
\address[Tradacete]{Instituto de Ciencias Matem\'aticas (CSIC-UAM-UC3M-UCM)\\
	Consejo Superior de Investigaciones Cient\'ificas\\
	C/ Nicol\'as Cabrera, 13--15, Campus de Cantoblanco UAM\\
	28049 Madrid, Spain.
	\newline
	\href{http://orcid.org/0000-0001-7759-3068}{ORCID: \texttt{0000-0001-7759-3068} }}
\email{pedro.tradacete@icmat.es}

\subjclass[2010]{Primary 46B42, 46B03, 46B25; Secondary 46B20, 46E15.}

\keywords{Banach lattice, linear embedding, lattice embedding, AM-space, $C[0,1]$}
%\thanks{}

\begin{document}

\begin{abstract}
A well-known classical result states that $c_0$ is linearly embeddable in a Banach lattice if and only if it is lattice embeddable. Improving results of H.P.~Lotz, H.P.~Rosenthal and N.~Ghoussoub, we prove that $C[0,1]$ shares this property with $c_0$. Furthermore, we show that any infinite-dimensional sublattice of $C[0,1]$ is either lattice isomorphic to $c_0$ or contains a sublattice isomorphic to $C[0,1]$. As a consequence, it is proved that for a separable Banach lattice $X$ the following conditions are equivalent:
\begin{enumerate}
	\item $X$ is linearly embeddable in a Banach lattice if and only if it is lattice embeddable;
	\item $X$ is lattice embeddable into $C[0,1]$.
\end{enumerate} 
%Some partial results regarding a nonseparable version of this result are also given.
\end{abstract}

\maketitle

\section{Introduction}

%
%The paper \cite{CKT} of P.G. Cassaza, N.J. Kalton and L. Tzafriri from 1987 starts recalling that	\textit{``one of the most important problems in the theory of Banach lattices, which is still open, is whether any complemented subspace of a Banach lattice must be linearly isomorphic to a Banach lattice.''}
%
%
%Unfortunately, references to this problem are scarce and no article giving an overview of the state of the art of this question seems to exist. Some mentions to this problem can be found in \cite{CHL} and \cite[pg.~1600]{Ros}, whereas \cite{FJT} and \cite[Section 1.c]{LinTza12} contain results which might be seen as an approach to the problem. In contrast, we can find related questions, such as the well-known problem nowadays referred to as the \textit{Complemented subspace problem}, which asks whether every complemented subspace of a $C(K)$-space is also isomorphic to a $C(K)$-space, with a deep amount of literature around it and a nice survey due to H.P.~Rosenthal \cite{Ros}.

%In general, there is still much to be understood about the relation between the lattice structure and the linear structure of a Banach lattice. 

A Banach lattice is a Banach space equipped with compatible lattice operations. Understanding the relation between the linear and lattice structures of Banach lattices has been the driving force behind a large part of research in the topic. However, many fundamental questions concerning this relation still remain open. 

It is well-known that given a Banach lattice $X$, the class of Banach lattices which can arise as a \emph{sublattice} of $X$ can be very different from those which can arise as a \emph{subspace} of $X$. Nevertheless, a number of results relating the containment of a Banach lattice as a subspace to the containment as a sublattice can be found in the literature (see \cite[\textsection 4.3]{AB}). For instance, a well-known fact, attributed to P.~Meyer-Nieberg \cite{MN1,MN2}, states that the Banach lattice $c_0$ is linearly embeddable in a Banach lattice if and only if it is lattice embeddable (cf. \cite[Theorem 4.61]{AB}). 

Under additional assumptions, similar results on this line are known: $\ell_1$ embeds as a sublattice of a Banach lattice $X$ if and only if it embeds as a complemented subspace (cf. \cite[Theorem 4.69]{AB}); if $X$ is a Dedekind $\sigma$-complete Banach lattice, then $\ell_\infty$ embeds as a sublattice of $X$ precisely when it embeds as a subspace (cf. \cite[Theorem 4.56]{AB}). In connection with these facts, a classical result attributed to G. Ja. Lozanovski \cite{Loza} states that neither $c_0$ nor $\ell_1$ are lattice embeddable in a Banach lattice $X$ if, and only if, neither of them linearly embeds in $X$ (which actually provides a characterization of reflexivity, cf. \cite[Theorem 4.71]{AB}).
Finally, let us also mention the deep result due to N. Kalton that if $X$ is an order-continuous rearrangement-invariant Banach lattice on $[0,1]$ different from $L_2$, then $X$ embeds as a complemented subspace of an order-continuous Banach lattice $Y$ if and only if $X$ embeds as a complemented sublattice \cite{Kalton}.

%
%(see, e.g.~\cite[Theorems 4.56, 4.61 and 4.69]{AB}). One of the most remarkable results, attributed to P.~Meyer-Nieberg \cite{MN1,MN2}, is the following:
%\begin{thm}\label{theo:carac0intro}\cite[Theorem 4.61]{AB}
%	The Banach lattice $c_0$ is linearly embeddable in a Banach lattice if and only if it is lattice embeddable.
%\end{thm}

The problem we address in the present paper is to characterize those Banach lattices which embed as a sublattice whenever they embed as a subspace in an arbitrary Banach lattice. To our knowledge, no example other than $c_0$ can be found in the literature. The main result of the article is the following:

\begin{theoremA}
\label{TheoA}
The Banach lattice $C[0,1]$ is linearly embeddable in a Banach lattice if and only if it is lattice embeddable.
\end{theoremA}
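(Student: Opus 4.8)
The ``if'' part is immediate: a lattice embedding is in particular a linear embedding, so the only content is the converse, and the plan is to reduce it to a statement about $C(K)$-spaces that can be settled by point-set topology, the real work being packed into one structural lemma.

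Given a linear isomorphic embedding $J\colon C[0,1]\to X$ into a Banach lattice $X$, I would first replace $X$ by the closed sublattice generated by $J(C[0,1])$; since the lattice operations are norm-continuous this is separable, and any lattice embedding into it is a lattice embedding into $X$, so from now on $X$ is separable. The key reduction I would aim for is the following: \emph{$X$ contains a closed sublattice $Y$ that possesses a strong order unit and still admits a linear isomorphic copy of $C[0,1]$.} Granting this, the rest is routine. A Banach lattice with a strong unit is, up to an equivalent renorming (apply the open mapping theorem to the identity from the principal-ideal norm onto the original one), an AM-space with unit, hence lattice-isometric to $C(\Omega)$ for a compact space $\Omega$ by Kakutani's representation theorem; separability of $Y$ forces $\Omega$ metrizable. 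Since $C(\Omega)\cong Y$ contains $C[0,1]$ linearly, $C(\Omega)^*$ is non-separable, hence $\Omega$ is uncountable; an uncountable compact metric space contains a homeomorphic copy of the Cantor set, and extending by Tietze's theorem a continuous surjection of that Cantor set onto $[0,1]$ produces a continuous surjection $\pi\colon\Omega\twoheadrightarrow[0,1]$. The composition operator $f\mapsto f\circ\pi$ is then a lattice-isometric embedding of $C[0,1]$ into $C(\Omega)$, which transports to a closed sublattice of $(X,\|\cdot\|_X)$ --- here it matters that $Y$ carries the $X$-norm up to equivalence --- and this settles Theorem~A modulo the reduction.

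The core of the argument, and the step I expect to be the main obstacle, is the construction of $Y$. A first task is to show that $X$ contains an \emph{order-bounded} linear copy of $C[0,1]$: a closed subspace $Z\cong C[0,1]$ whose unit ball is dominated in order by some $u\in X_+$. This cannot be read off from $J$ directly, because a linear copy of $C[0,1]$ carries a priori no compatibility with the order; one has to extract a better-placed sub-copy, for instance by a sliding-hump and disjointification argument inside $X$ (exploiting that $C[0,1]$ contains $c_0$ complementably), or dually by lifting, along a Cantor set of parameters, the relatively disjoint family of Dirac measures in $M[0,1]=C[0,1]^*$ to a norm-bounded relatively disjoint family in $X^*$, in the spirit of Rosenthal's work on relatively disjoint families of measures. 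Once $Z\subseteq I_u$ is obtained, one must further guarantee that the closed sublattice generated by $Z$ does not leak out of the principal ideal $I_u$ --- equivalently, that the lattice norm of $X$ and the $u$-uniform norm remain equivalent on it --- which is exactly what endows $Y$ with a strong unit; I would do this through an inductive construction of the generators of $Y$ patterned on the Faber--Schauder tent system of $C[0,1]$, with the generators taken to be perturbed positive elements of $X$ kept dominated by a common $u$. This order control is precisely where the earlier theorems of Lotz, Rosenthal and Ghoussoub required extra structural hypotheses on $X$ (order continuity of the norm, $X$ a dual lattice, and the like), under which it comes for free; removing those hypotheses is the new ingredient.
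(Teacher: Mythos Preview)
Your outline is coherent up to and including the reduction to separable $X$, and the endgame (strong unit $\Rightarrow$ AM-space with unit $\Rightarrow$ $C(\Omega)$ with $\Omega$ compact metric uncountable $\Rightarrow$ continuous surjection onto $[0,1]$) is correct. But the heart of the argument --- the ``key reduction'' producing a closed sublattice $Y\subseteq X$ with a strong order unit that still contains a linear copy of $C[0,1]$ --- is not proved; you yourself flag it as ``the main obstacle'' and ``the new ingredient'', and the sketches you offer (sliding hump, disjointification, dualising Rosenthal's relatively disjoint families, an inductive Faber--Schauder construction kept under a common $u$) remain aspirations rather than arguments. The difficulty is genuine: even granting Ghoussoub's positive embedding $T\colon C(\Delta)\to X$, which does place an order-bounded linear copy of $C(\Delta)$ inside the principal ideal $I_u$ with $u=T\mathbf{1}$, the $\|\cdot\|_X$-closed sublattice that copy generates has no reason to stay within any principal ideal on which the $u$-norm and the $X$-norm are equivalent, and you give no mechanism to force this. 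As written, the proposal is an outline with its central step missing.

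The paper avoids exactly this obstacle by a different device. Instead of looking for an AM-sublattice of $X$, it embeds the separable $X$ as a sublattice of the universal object $C(\Delta,L_1)$ (Leung--Li--Oikhberg--Tursi) and, computing explicitly with $L_1$-valued continuous functions, builds a sublattice $Z\subseteq X$ together with lattice homomorphisms $Q_1,Q_2$ on the ambient space so that $Q_2Q_1$ maps $Z$ onto a lattice copy of $C(\Delta)$. This yields a lattice \emph{quotient} $Z\twoheadrightarrow Z/J\cong C(\Delta)$, not a $C(K)$-sublattice. The last step is then the projectivity of $C[0,1]$ (de~Pagter--Wickstead): since $C[0,1]$ lattice-embeds into $C(\Delta)\cong Z/J$, projectivity lifts that embedding through the quotient map to a lattice embedding $C[0,1]\hookrightarrow Z\subseteq X$. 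This quotient-and-lift manoeuvre is precisely what your strategy lacks: it trades the hard task of confining a generated sublattice to a principal ideal for the softer one of exhibiting a controllable quotient, with projectivity supplying the passage back up.
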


The proof of Theorem \ref{TheoA} relies on some classical and recent results in the theory of Banach lattices. On the one hand, H. P. Lotz and H. P. Rosenthal proved in \cite{LotzRosenthal78} that for a Banach lattice $X$ it is equivalent that: 
\begin{enumerate}
	\item $L_1$ is lattice embeddable in $X^*$; 
	\item $C(\Delta)^*$ is lattice embeddable in $X^*$;
	\item there exists a positive embedding $T\colon C(\Delta) \longrightarrow X$;
\end{enumerate}
where $\Delta$ denotes the Cantor space.
Furthermore, they asked whether these conditions are in turn equivalent to the fact that $C(\Delta)$ linearly embeds into $X$. Notice that one cannot expect that the linear embeddability of $C(\Delta)$ into $X$ implies the lattice embeddability of $C(\Delta)$. Indeed, $C(\Delta)$ linearly embeds in $C[0,1]$ (they are in fact linearly isomorphic by Miljutin's Theorem \cite[Theorem 21.5.10]{semadeni}) but $C(\Delta)$ is not lattice isomorphic to a sublattice of $C[0,1]$ (see, for example, Lemma \ref{Peanoproposition}). The previous question of Lotz and Rosenthal was answered in the affirmative by N. Ghoussoub in \cite{Ghoussoub83}. Although the proof of Theorem \ref{TheoA} relies on the aforementioned results of Lotz, Rosenthal and Ghoussoub, note that our theorem provides a strengthening of Ghoussoub's answer, since the linear embeddability of $C(\Delta)$ is equivalent to the linear embeddability of $C[0,1]$, whereas the lattice embeddability of $C[0,1]$ into $X$ implies condition (3), since there are positive linear embeddings from $C(\Delta)$ into $C[0,1]$ (for instance, because of the above mentioned result of N. Ghoussoub).

On the other hand, D.H.~Leung, L.~Li, T.~Oikhberg and M.A.~Tursi recently proved that every separable Banach lattice embeds lattice isometrically into $C(\Delta,L_1)$ \cite[Theorem 1.1]{LLOT19}, where $C(\Delta,L_1)$ denotes the Banach lattice of continuous functions $f\colon \Delta\rightarrow L_1$, endowed with the norm $\|f\|=\sup_{t \in \Delta} \|f(t)\|_{L_1}$. This result will also play a fundamental role in the proof of Theorem \ref{TheoA}.

The last key ingredient of Theorem \ref{TheoA} is the recent concept of projectivity for Banach lattices introduced by B. de Pagter and A.W. Wickstead in \cite{dPW15} (see Definition \ref{DefinitionProjectivity} below). The study of projectivity for $C(K)$-spaces was initiated by these authors and a complete characterization was given in \cite{amr20_2}. We will make use of the fact that $C[0,1]$ is projective \cite[Corollary 11.5]{dPW15} and, more precisely, of Lemma \ref{LemmEmbeddabilityC[0,1]}, which is a direct consequence of this fact.

\bigskip

Whereas Section \ref{SectionC[0,1]} is devoted to the proof of Theorem \ref{TheoA}, in Section \ref{SectionEmbeddability}, we study what other Banach lattices share the aforementioned property with $c_0$ and $C[0,1]$, providing a characterization in the separable case:

\begin{theoremB}
\label{TheoB}
For a separable Banach lattice $X$ the following conditions are equivalent:
\begin{enumerate}
	\item $X$ is linearly embeddable in a Banach lattice if and only if it is lattice embeddable;
	\item $X$ is lattice embeddable into $C[0,1]$.
\end{enumerate} 
\end{theoremB}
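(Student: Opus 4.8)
The plan is to establish Theorem~\ref{TheoB} by combining Theorem~\ref{TheoA}, the dichotomy for sublattices of $C[0,1]$, and the universality result of Leung--Li--Oikhberg--Tursi. For the implication $(2)\Rightarrow(1)$, suppose $X$ is lattice embeddable into $C[0,1]$ and that $X$ embeds linearly into some Banach lattice $Y$; we must produce a lattice embedding of $X$ into a Banach lattice. By the structural dichotomy for infinite-dimensional sublattices of $C[0,1]$ (the second main result announced in the abstract), $X$ is either lattice isomorphic to $c_0$ or contains a sublattice lattice isomorphic to $C[0,1]$. In the first case, since $X\cong c_0$ linearly embeds into $Y$, the Meyer-Nieberg result (cf. \cite[Theorem 4.61]{AB}) gives that $c_0$, hence $X$, embeds as a sublattice of $Y$. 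In the second case, $X$ contains a copy of $C[0,1]$ as a sublattice, so $Y$ contains $C[0,1]$ linearly; by Theorem~\ref{TheoA}, $Y$ contains $C[0,1]$ as a sublattice. But then one still has to recover a lattice embedding of the whole of $X$ (not just of the $C[0,1]$ subcopy) into some Banach lattice. This is where one invokes the fact that $X$, being a separable Banach lattice, always embeds lattice isometrically into $C(\Delta,L_1)$ by \cite[Theorem 1.1]{LLOT19}. So the second case requires no hypothesis on $Y$ at all: condition $(1)$ holds vacuously in the sense that whenever $X$ linearly embeds into a Banach lattice, it automatically (and independently) lattice embeds into $C(\Delta,L_1)$. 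The only genuine content in $(2)\Rightarrow(1)$ is thus the first case, and there the point is precisely that the dichotomy forces $X\cong c_0$ when $X$ is not ``large''.

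Wait --- I should be careful: the above argument shows $(2)\Rightarrow(1)$ using only the dichotomy and Meyer-Nieberg, and does not even need Theorem~\ref{TheoA}. Indeed if $X$ lattice embeds into $C[0,1]$ and linearly embeds into $Y$, then either $X\cong c_0$ (use Meyer-Nieberg to get a lattice embedding into $Y$) or $X$ lattice embeds into $C(\Delta,L_1)$ anyway. So $(2)\Rightarrow(1)$ is the easy direction once the dichotomy is in hand.

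The harder direction is $(1)\Rightarrow(2)$, equivalently $\neg(2)\Rightarrow\neg(1)$: if $X$ is a separable Banach lattice that is \emph{not} lattice embeddable into $C[0,1]$, we must exhibit a Banach lattice $Y$ such that $X$ embeds linearly into $Y$ but not as a sublattice. The natural candidate for $Y$ is $C[0,1]$ itself, or rather a space built from it. First recall that $C[0,1]$ is linearly universal for separable Banach spaces (Banach--Mazur), so $X$ certainly embeds linearly into $C[0,1]$; the issue is to arrange that $X$ does not lattice embed into whatever lattice we use, while it still does embed linearly. If we take $Y=C[0,1]$ directly, then $X$ embeds linearly into $Y$, and by hypothesis $\neg(2)$ it does not embed as a sublattice of $Y=C[0,1]$ --- so $Y=C[0,1]$ works immediately, and this direction is also short.

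Hence the real work is entirely in the structural dichotomy for sublattices of $C[0,1]$, which I would prove first as a separate proposition: every infinite-dimensional closed sublattice of $C[0,1]$ is either lattice isomorphic to $c_0$ or contains a sublattice lattice isomorphic to $C[0,1]$. The strategy there is to analyze a sublattice $Z\subseteq C[0,1]$ via the induced equivalence/pre-order structure on $[0,1]$: a sublattice of $C(K)$ is, up to the obvious identifications, determined by a closed equivalence relation on a quotient together with order data, and one can use a result along the lines of the Kakutani/Stone representation to see $Z$ as $C(L)$ for some quotient $L$ of a subset of $[0,1]$, possibly with a twist coming from the fact that $Z$ need not contain constants. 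The dichotomy then comes from a case analysis on $L$: if $L$ is countable (scattered), then $Z$ is a $C(L)$-space on a countable compact, and using that $Z$ lives inside $C[0,1]$ one forces $L$ to be finite or $\omega+1$-like, yielding $c_0$ (here one uses that $C[0,1]$ does not contain $c_0(\Gamma)$ for uncountable $\Gamma$ as a sublattice, and the Peano-type obstruction of Lemma~\ref{Peanoproposition} which prevents $C(\Delta)$-like behavior unless there is a genuine interval); if $L$ is uncountable, it contains a perfect set, and a Peano/Hahn--Mazurkiewicz surjection argument together with Lemma~\ref{LemmEmbeddabilityC[0,1]} (projectivity of $C[0,1]$) lets one build a sublattice copy of $C[0,1]$ inside $Z$. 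The main obstacle is precisely this dichotomy: controlling the lattice structure of an arbitrary closed sublattice of $C[0,1]$ --- in particular handling sublattices that do not contain the constant functions, where the associated ``support'' structure involves a closed set of zeros and a vanishing condition --- and extracting from an uncountable ``spectrum'' an honest lattice copy of $C[0,1]$ rather than merely of $C(\Delta)$. Once this is done, both implications of Theorem~\ref{TheoB} follow by the short arguments above, using Theorem~\ref{TheoA} to upgrade linear copies of $C[0,1]$ inside an ambient lattice to lattice copies and \cite[Theorem 1.1]{LLOT19} to handle the ``large'' case uniformly.
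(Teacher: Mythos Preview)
Your argument for $(1)\Rightarrow(2)$ is correct and matches the paper: take $Y=C[0,1]$, use Banach--Mazur universality, done.

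Your $(2)\Rightarrow(1)$ has a genuine gap stemming from a misreading of condition~(1). Condition~(1) says: for every Banach lattice $Y$, if $X$ embeds linearly into $Y$ then $X$ embeds \emph{as a sublattice of that same $Y$}. It does not say ``$X$ lattice embeds into some Banach lattice''; under that reading (1) would hold for every separable Banach lattice by \cite[Theorem~1.1]{LLOT19}, contradicting the theorem (e.g.\ $L_1$ does not lattice embed into $C[0,1]$). Your ``Wait'' paragraph, concluding that Theorem~\ref{TheoA} is unnecessary and that the second case is handled ``vacuously'' by $C(\Delta,L_1)$, is therefore wrong.

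In fact you were one line away from the correct argument before you took the wrong turn. In the second case you reach: $C[0,1]$ is a sublattice of $X$, hence $C[0,1]$ embeds linearly in $Y$, hence by Theorem~\ref{TheoA} $C[0,1]$ lattice embeds into $Y$. Now simply use hypothesis~(2) itself: $X$ lattice embeds into $C[0,1]$, and $C[0,1]$ lattice embeds into $Y$, so composing gives a lattice embedding $X\hookrightarrow C[0,1]\hookrightarrow Y$. This is exactly the paper's proof, and it shows that Theorem~\ref{TheoA} is essential, not dispensable.

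(Your long sketch of how to prove the dichotomy, Theorem~\ref{TheoC}, is tangential here---Theorem~\ref{TheoB} is deduced from Theorems~\ref{TheoA} and~\ref{TheoC} as black boxes---and in any case the paper's proof of Theorem~\ref{TheoC} proceeds quite differently, via failure of order continuity and Kakutani's description of sublattices of $C[0,1]$, not via a scattered/perfect dichotomy on a representing space.)
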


Theorem \ref{TheoB} will be obtained as a simple consequence of Theorem \ref{TheoA} and the following theorem, which is of independent interest and, surprisingly, seems to be new in the literature:

\begin{theoremC}
\label{TheoC}
An infinite-dimensional sublattice of $C[0,1]$ is either lattice isomorphic to $c_0$ or it contains a sublattice isomorphic to $C[0,1]$.
\end{theoremC}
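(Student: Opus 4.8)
The plan is to analyze an infinite-dimensional closed sublattice $Y \subseteq C[0,1]$ through its representation as a $C(K)$-space, or rather through the structure of its lattice ideals. First I would recall that a closed sublattice of $C[0,1]$ is an AM-space with a (possibly absent) unit, hence it is lattice isometric to some closed sublattice of $C(K)$; more usefully, since $C[0,1]$ is separable, $Y$ is separable. The key dichotomy should come from examining whether $Y$ is order continuous or not. If $Y$ is order continuous, then being an order-continuous AM-space, it must be lattice isomorphic to $c_0$ (or finite-dimensional, which is excluded): indeed an order-continuous AM-space with a weak unit is a KB-space that is also AM, forcing it to be $c_0$ or $c_0(\Gamma)$, and separability pins down $c_0$; the general order-continuous case reduces to this via a weak unit argument. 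So the substantive case is when $Y$ is \emph{not} order continuous.

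When $Y$ is not order continuous, there is an increasing bounded sequence $0 \le y_n \uparrow$ in $Y$ with no supremum in $Y$, equivalently $\|y_{n+1} - y_n\| \not\to 0$ is \emph{not} what we want — rather, the failure of order continuity yields (after passing to a subsequence and normalizing) a sequence that behaves like the summing basis, i.e.\ $Y$ contains a sublattice lattice-isomorphic to $c$, the space of convergent sequences — but that is still not $C[0,1]$. The real mechanism I expect to use is that failure of order continuity in a sublattice of $C[0,1]$ produces, via the functional calculus, a continuous surjection onto $[0,1]$: concretely, one finds $f \in Y$, $f \ge 0$, such that the closed sublattice generated by $f$ and constants (or by $f$ alone together with the ideal it generates inside $Y$) contains functions with arbitrarily fine oscillation, and one extracts a ``Peano-type'' continuous map $\varphi \colon K \twoheadrightarrow [0,1]$ where $K$ is the maximal-ideal space of the unitification of $Y$. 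Composition with $\varphi$ then gives a lattice embedding $C[0,1] \hookrightarrow Y$. This is precisely where I would invoke Lemma~\ref{Peanoproposition} (the cited ``Peano proposition'') and, crucially, the projectivity of $C[0,1]$ via Lemma~\ref{LemmEmbeddabilityC[0,1]}: projectivity is designed exactly to upgrade an approximate/weak embedding situation into an honest lattice embedding, so once one has produced in $Y$ a family of elements satisfying the relations defining a $C[0,1]$-system up to small error, Lemma~\ref{LemmEmbeddabilityC[0,1]} delivers the genuine sublattice isomorphic to $C[0,1]$.

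Putting it together, the steps in order are: (1) reduce to $Y$ closed, separable, infinite-dimensional; (2) if $Y$ is order continuous, use the AM $+$ order-continuous structure to conclude $Y \cong_{\text{lattice}} c_0$; (3) if $Y$ is not order continuous, exhibit an element $f \in Y_+$ whose behavior, through the AM functional calculus, forces the presence in $Y$ of approximate copies of a generating system for $C[0,1]$ — this is the Peano/oscillation step; (4) apply Lemma~\ref{LemmEmbeddabilityC[0,1]} (projectivity of $C[0,1]$) to turn the approximate system into a true sublattice copy of $C[0,1]$.

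The main obstacle I anticipate is step (3): it is not a priori obvious that the mere failure of order continuity in a sublattice of $C[0,1]$ is enough to build something as rich as $C[0,1]$, since a non-order-continuous AM-space could conceivably look like $c$ or $C(\omega^\omega+1)$ or some ``thin'' $C(K)$ with $K$ countable — none of which contains $C[0,1]$. So the heart of the argument must use that $Y$ lives \emph{inside} $C[0,1]$: the ambient space forces the maximal ideal space of (the unitification of) $Y$ to be a continuous image of a subspace of $[0,1]$, and one must argue that non-order-continuity makes this image uncountable and indeed large enough to map onto $[0,1]$. I would expect to handle this by a careful extraction: non-order-continuity gives a disjoint sequence $(u_n)$ in $Y_+$ of norm $1$ lying under a fixed $g \in Y_+$ with $\|u_n\|$ not tending to $0$ in a suitable ``essential'' sense, and then the supports of the $u_n$ in $[0,1]$, together with the topology $Y$ induces, can be organized into a dyadic tree of nonempty open sets — yielding the surjection onto the Cantor set and hence onto $[0,1]$. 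Verifying that this tree construction genuinely lands inside $Y$ (and not merely inside $C[0,1]$) is the delicate point, and is where I would lean hardest on Lemmas~\ref{Peanoproposition} and \ref{LemmEmbeddabilityC[0,1]}.
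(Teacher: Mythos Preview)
Your opening dichotomy is exactly right and matches the paper: if $Y$ has order-continuous norm then $Y\cong_{\mathrm{lat}} c_0$ (this is \cite[Lemma 2.7.12]{Meyer}; note, though, that your justification is slightly off --- $c_0$ is order continuous but not a KB-space), and otherwise one obtains a pairwise disjoint sequence $(g_n)\subset Y_+$ with $\|g_n\|=1$ bounded above by some $g\in Y_+$. From that point on, however, your plan diverges from the paper and is substantially more complicated than necessary. The paper uses neither Lemma~\ref{LemmEmbeddabilityC[0,1]} nor Proposition~\ref{Peanoproposition} here; no approximate system, no dyadic tree, and no projectivity lifting is involved in the proof of Theorem~C.

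The key idea you are missing is that a \emph{single} well-chosen pair $(g,g_n)$ already generates a sublattice containing $C[0,1]$, via an \emph{explicit} lattice embedding. One picks a connected component $I_n=(a_n,b_n)$ of $\{g_n>0\}$ on which $g_n$ attains the value $1$; disjointness forces $|I_n|\to 0$, so by uniform continuity of $g$ one can choose $n$ with $g\geq \tfrac12$ on $\overline{I_n}$. Then $t\mapsto g_n(t)/g(t)$ is a continuous map from $I_n$ onto $[0,M]$ for some $M\in(0,1]$ (here connectedness of $[0,1]$ does the ``Peano'' work for free), and the formula
\[
(Tf)(t)=f\!\left(\frac{g_n(t)}{g(t)}\right)g(t)
\]
defines a lattice homomorphism from $C[0,1]$ into the sublattice generated by $g$ and $g_n$ (Kakutani's description of that two-generated sublattice is used to check the range). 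Restricting $T$ to the functions constant on $[M,1]$ gives a lattice embedding of a copy of $C[0,1]$ into $Y$, with lower bound $\tfrac12$. So the ``delicate point'' you flag is resolved not by any approximate/projectivity machinery, but by exploiting connectedness of the ambient interval to produce an honest continuous surjection and an explicit composition formula.
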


We finish Section \ref{SectionEmbeddability} with some consequences of Theorem \ref{TheoB}, a simple characterization of those $C(K)$-spaces which lattice embed into $C[0,1]$, and a discussion on the possible existence of nonseparable Banach lattices for which linear embeddability implies lattice embeddability, leaving several open questions.

\bigskip

\textbf{Notation. } Our terminology is standard; any unexplained notation can be found in \cite{AB}, \cite{LinTza12} and \cite{Meyer}. By an operator we shall mean a bounded linear operator. An operator $T\colon X \longrightarrow Y$ between Banach lattices is said to be a lattice homomorphism if $Tx \vee Ty = T (x\vee y)$ and $Tx \wedge Ty = T (x\wedge y)$. We say that $T$ is a lattice isomorphism if it is a lattice homomorphism and a Banach space isomorphism (notice that this notation differs from the one used in  \cite{LotzRosenthal78}, where a lattice isomorphism might not have closed range).
We say that a Banach space $X$ is \textit{linearly embeddable} into another Banach space $Y$ whenever there exists an operator $T\colon X \longrightarrow Y$ which is an isomorphism onto its range, i.e.~$T$ is a linear embedding. If, in addition, $X$ and $Y$ are Banach lattices and $T$ is also a lattice homomorphism then $X$ is said to be \textit{lattice embeddable} into $Y$.
For any Banach lattice $X$ and any compact space $K$, we denote by $C(K,X)$ the Banach lattice consisting of all continuous functions $f\colon K\longrightarrow X$ endowed with the norm $\|f\|=\sup_{t\in K} \|f(t)\|_X$. If we do not require the functions to be continuous then we get the Banach lattice $\ell_\infty(K,X)$. We see $C(K,X)$ as a sublattice of $\ell_\infty(K,X)$ in the canonical way. Furthermore, for any $f\in \ell_\infty(K,X)$, by $\supp(f)$ we denote its support, i.e.~$\supp(f):=\{t\in K: f(t)\neq 0 \}$.

\section{Lattice embeddability of $C[0,1]$}
\label{SectionC[0,1]}

In this section we prove Theorem \ref{TheoA}. First, we need a simple criterion to guarantee that $C[0,1]$ is lattice embeddable in a Banach lattice, which uses the concept of projectivity.

\begin{defn}\cite[Definition 10.1]{dPW15}
\label{DefinitionProjectivity}
A Banach lattice $P$ is said to be \textit{projective} if, whenever $X$ is a Banach lattice, $J$ is a closed ideal in $X$ and $Q\colon X \longrightarrow X/J$ is the quotient map, for every lattice homomorphism $T\colon P \longrightarrow X/J$ and every $\varepsilon >0$ there exists a lattice homomorphism $\widehat T\colon P \longrightarrow X$ such that $T=Q \circ \widehat T$ and $\|\widehat T \| \leq (1+\varepsilon )\|T\|$.
\end{defn}

The fact that $C[0,1]$ is projective \cite[Corollary 11.5]{dPW15}, yields the following result:

\begin{lem}
\label{LemmEmbeddabilityC[0,1]}
Let $X$ be a Banach lattice and $J$ be a closed ideal in 
$X$. If $C[0,1]$ is lattice embeddable into $X/J$ then $C[0,1]$ is lattice embeddable into $X$.  
\end{lem}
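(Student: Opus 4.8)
The plan is to deduce Lemma \ref{LemmEmbeddabilityC[0,1]} directly from the projectivity of $C[0,1]$ (Corollary 11.5 in \cite{dPW15}) together with the definition of projectivity. Suppose $S\colon C[0,1]\longrightarrow X/J$ is a lattice embedding, i.e. a lattice homomorphism which is an isomorphism onto its range. Applying projectivity of $C[0,1]$ to the quotient map $Q\colon X\longrightarrow X/J$ and the lattice homomorphism $S$, with some fixed $\varepsilon>0$, we obtain a lattice homomorphism $\widehat S\colon C[0,1]\longrightarrow X$ with $S=Q\circ\widehat S$ and $\|\widehat S\|\le(1+\varepsilon)\|S\|$.

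The only remaining point is that $\widehat S$ is again an isomorphism onto its range; this is automatic because $S=Q\circ\widehat S$ is already bounded below. Indeed, for every $f\in C[0,1]$ we have
\[
\|\widehat S f\|_X \ge \|Q(\widehat S f)\|_{X/J} = \|S f\|_{X/J} \ge c\,\|f\|
\]
for the constant $c>0$ witnessing that $S$ is bounded below, since the quotient map $Q$ has norm at most $1$. Combined with the upper bound $\|\widehat Sf\|\le(1+\varepsilon)\|S\|\,\|f\|$, this shows $\widehat S$ is an isomorphism onto its range, and being a lattice homomorphism it is a lattice embedding of $C[0,1]$ into $X$.

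There is essentially no obstacle here: the lemma is just the observation that a lifting of a lattice \emph{embedding} through a quotient map is automatically a lattice embedding, because injectivity and boundedness below are inherited "for free" from the fact that the composition $Q\circ\widehat S$ equals the original embedding. The substantive input—that such a lattice-homomorphic lifting exists at all—is precisely the projectivity of $C[0,1]$, which we are allowed to invoke. One small caveat to record in the write-up is that the definition of projectivity is stated for a fixed $\varepsilon>0$; since we only need \emph{some} bounded lifting, any choice such as $\varepsilon=1$ suffices, so no quantitative optimization is needed.
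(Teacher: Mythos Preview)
Your proof is correct and follows essentially the same approach as the paper: invoke the projectivity of $C[0,1]$ to lift the lattice embedding $S$ through the quotient map $Q$, and then observe that the lift $\widehat S$ is bounded below because $S=Q\circ\widehat S$ is and $\|Q\|\le 1$. The paper's write-up is nearly identical, differing only in notation and in normalizing the embedding so that $\alpha\|f\|\le\|Tf\|\le\|f\|$ before applying projectivity.
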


\begin{proof}
Let $Q\colon X\longrightarrow X/J$ be the quotient map and let $T\colon C[0,1]\longrightarrow X/J$ be a lattice embedding such that $\alpha \|f\|\leq \|Tf\|\leq\|f\|$ for every $f\in C[0,1]$. Since $C[0,1]$ is projective, for every $\varepsilon>0$ there exists a lattice homomorphism $\widehat T\colon C[0,1] \longrightarrow X$ such that $T=Q \circ \widehat T$ and $\|\widehat T \| \leq (1+\varepsilon )\|T\|$. Note that for $f\in C[0,1]$ we have
$$
(1+\varepsilon)\|f\|\geq\|\widehat Tf\| \geq \|Q\|^{-1}\|Q\widehat Tf\|=\|Tf\|\geq \alpha\|f\|.
$$
Therefore, $\widehat T\colon C[0,1]\longrightarrow X$ defines a lattice embedding (with embedding constant as close as we want to that of $T$).
\end{proof}

To prove Theorem \ref{TheoA} we need to show that if $X$ is a Banach lattice and $C[0,1]$ linearly embeds into $X$ then $C[0,1]$ is lattice embeddable into $X$. By Miljutin's Theorem \cite[Theorem 21.5.10]{semadeni}, the linear embeddability of $C[0,1]$ is equivalent to the linear embeddability of $C(\Delta)$. Now, by \cite[Theorem (A)]{Ghoussoub83}, there exists a positive linear embedding $T\colon C(\Delta) \longrightarrow X$. By the equivalence between (c) and (d) in \cite[Theorem 2]{LotzRosenthal78}, there is no loss of generality in assuming that there is $0<\varepsilon<\frac{1}{4}$ such that  $(1-\varepsilon)\|f\| \leq \|Tf\| \leq \|f\|$ for every $f \in C(\Delta)$.
Thus, Theorem \ref{TheoA} will be a consequence of the following theorem which we prove below.

\begin{thm}
	Let $X$ be a Banach lattice, $0<\varepsilon<\frac{1}{4}$ and $T\colon C(\Delta) \longrightarrow X$ be a positive linear embedding with $(1-\varepsilon)\|f\| \leq \|Tf\| \leq \|f\|$ for every $f \in C(\Delta)$. Then $C[0,1]$ is lattice embeddable into $X$.
\end{thm}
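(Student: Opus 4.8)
The plan is to upgrade the given positive linear embedding $T\colon C(\Delta)\to X$ into a lattice embedding of $C[0,1]$ by building, inside $X$, an auxiliary Banach lattice onto which $C[0,1]$ maps as a sublattice, and then invoking the projectivity lemma. The starting point is that since $X$ is a Banach lattice, it embeds lattice isometrically into some $C(\Delta,L_1)$ by the Leung--Li--Oikhberg--Tursi theorem; so we may as well work with the composition $S=T$ viewed as a positive almost-isometric embedding $C(\Delta)\to C(\Delta,L_1)$, or more precisely, we want to transfer the problem to that concrete setting where the lattice operations are computed coordinatewise in $L_1$. The key structural observation I would try to extract is that a positive embedding of $C(\Delta)$ which is an almost-isometry on the norm cannot distort the lattice operations too much: for $f,g\in C(\Delta)$ the element $Tf\vee Tg$ lies close (in norm, with an error controlled by $\varepsilon$) to $T(f\vee g)$, because on an AM-space the norm and the order interact rigidly and $\|f\vee g\| = \max(\|f\|,\|g\|)$ when $f,g\geq 0$, etc. Iterating such estimates along a dyadic tree of functions should produce an element of $X$ (or of a suitable sublattice generated by the range of $T$) on which the operations behave lattice-homomorphically up to a vanishing perturbation.

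Concretely, the steps I would carry out are: (1) Use $C(\Delta)\cong C[0,1]$ linearly (Miljutin) only to get $T$; then fix a lattice-isometric copy of $X$ inside $C(\Delta,L_1)$ and regard $T$ as landing there. (2) Choose a sequence of "Peano-type" or "tent" functions in $C[0,1]$ that lattice-generate a dense sublattice, map them via a fixed positive linear embedding $C[0,1]\to C(\Delta)$ (which exists, e.g. by Ghoussoub, as noted) and then via $T$ into $X\subseteq C(\Delta,L_1)$, obtaining a bounded sequence $(x_n)$ in $X$. (3) Show that for each $t\in\Delta$, the scalars $\|x_n(t)\|_{L_1}$, after a suitable normalization, define a point $\sigma(t)\in[0,1]$ (or in $\Delta$), so that $t\mapsto\sigma(t)$ is a continuous surjection onto an interval, and that the map $f\mapsto (t\mapsto \text{composition with }\sigma)$ coincides, up to an $\varepsilon$-perturbation, with the linear map we have. (4) Conclude that the closed sublattice $Y$ of $X$ generated by $(x_n)$ admits a lattice homomorphism onto $C[0,1]$ which is close to an isomorphism; combine a small-perturbation argument with the fact that $C[0,1]$ is projective (Lemma \ref{LemmEmbeddabilityC[0,1]}) — viewing $Y$ as a quotient of $C[0,1]$ and lifting — to get an honest lattice embedding $C[0,1]\hookrightarrow Y\subseteq X$.

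The main obstacle I anticipate is step (3)--(4): controlling how far $Tf\vee Tg$ sits from $T(f\vee g)$ uniformly, with constants that do not degrade as one builds deeper and deeper into the generating tree. A positive operator need not be a lattice homomorphism at all, and the only leverage is the norm estimate $(1-\varepsilon)\|f\|\leq\|Tf\|\leq\|f\|$ together with positivity; turning this into an effective "almost lattice homomorphism" statement requires exploiting the $AM$-structure of $C(\Delta)$ — in particular that disjoint positive functions $f,g$ satisfy $\|f+g\|=\|f\vee g\|=\max(\|f\|,\|g\|)$ — and pushing this disjointness-detection through $T$ using the norm inequality on cleverly chosen combinations. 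I expect the argument to hinge on showing that if $f\wedge g = 0$ in $C(\Delta)$ then $\|Tf\wedge Tg\|$ is small (order $\varepsilon$), since $\|T(f+g)\| = \|Tf+Tg\|$ is within $\varepsilon$ of $\max(\|f\|,\|g\|)$ while $\|Tf\vee Tg\|\geq\|Tf+Tg\|-\|Tf\wedge Tg\|$ and each of $\|Tf\|,\|Tg\|$ is nearly the corresponding norm; once disjointness is (approximately) preserved, a standard gliding-hump/perturbation construction produces a genuinely disjoint sequence in $X$ spanning a copy of $C(\Delta)$ as a sublattice, and then Lemma \ref{LemmEmbeddabilityC[0,1]} or a direct projectivity argument upgrades $C(\Delta)$'s sublattice copy to one of $C[0,1]$. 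Making the bookkeeping of the accumulated errors converge (by taking $\varepsilon$ small enough and summing a geometric series) is where most of the technical work will go.
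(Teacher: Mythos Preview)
Your outline correctly identifies two ingredients that the paper also uses --- the Leung--Li--Oikhberg--Tursi embedding of $X$ into $C(\Delta,L_1)$, and the projectivity of $C[0,1]$ at the end --- but the heart of your plan, the ``approximate disjointness'' step, does not work.

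Your key claim is that if $f\wedge g=0$ in $C(\Delta)$ with $\|f\|=\|g\|=1$, then $\|Tf\wedge Tg\|$ is $O(\varepsilon)$. This is false. Take $X=C[0,1]$ and any positive linear embedding $T\colon C(\Delta)\to C[0,1]$ with $T\mathbf 1=\mathbf 1$; let $f,g$ be complementary clopen indicators, so $Tf+Tg=\mathbf 1$. Each of $Tf,Tg$ is a continuous function on $[0,1]$ attaining values near $1$ somewhere (since $\|Tf\|,\|Tg\|\geq 1-\varepsilon$) and near $0$ elsewhere; by the intermediate value theorem there is a point $t$ with $Tf(t)=Tg(t)=\tfrac12$, hence $\|Tf\wedge Tg\|\geq \tfrac12$, independently of $\varepsilon$. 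So positivity plus the norm inequality does \emph{not} force infima of images of disjoint functions to be small, and a gliding-hump/perturbation argument built on that premise cannot get off the ground. The accumulated error you worry about is not merely a bookkeeping issue: the individual errors are already of order $1$.

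The paper's proof avoids any ``almost lattice homomorphism'' estimate. Working inside $C(\Delta,L_1)$, it does two things you are missing. First, it restricts to a closed $K\subseteq\Delta$ (the quotient $Q_1$) consisting of those $s$ at which, for every level $n$, exactly one of the $g_\sigma(s)$ with $|\sigma|=n$ has $L_1$-norm $\geq 1-\varepsilon$; this already forces the ``siblings'' to be small \emph{at each such point}, not in sup-norm. Second --- and this is the crucial idea --- for each $s\in K$ it identifies a measurable set $L(s)\subseteq[0,1]$ on which the dominant branch genuinely dominates (the condition $h_\sigma(s)(t)\geq 2(h_\emptyset(s)(t)-h_\sigma(s)(t))$), and multiplies by $\chi_{L(s)}$ (the map $Q_2$). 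After these two reductions, the explicit lattice combinations
\[
y_{\sigma^\frown 0}=(2x_{\sigma^\frown 0}-x_{\sigma^\frown 1})^+\wedge y_\sigma,\qquad y_{\sigma^\frown 1}=(2x_{\sigma^\frown 1}-x_{\sigma^\frown 0})^+\wedge y_\sigma
\]
are \emph{exactly} disjoint and satisfy $y_\sigma=\chi_{K'_\sigma}x_\emptyset$, with $\|y_\sigma\|\geq 1-4\varepsilon$. One then lifts the same lattice expressions back to $X$ (the $z_\sigma$), checks that $Q_2Q_1$ maps the sublattice they generate onto a copy of $C(\Delta)$, and finishes with projectivity. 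The point is that disjointness is manufactured by lattice operations \emph{after} two carefully designed quotients, not approximated in norm.
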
 
\begin{proof}
	Without loss of generality we assume that $X$ is the Banach lattice generated by $T(C(\Delta))$. In particular, it is separable, so by \cite[Theorem 1.1]{LLOT19} we can suppose that $X$ is a sublattice of $C(\Delta,L_1)$.
	Let us give a brief overview of the proof. Instead of working directly with $C[0,1]$ we are going to construct a sublattice $Z\subseteq X$, a suitable compact space $K\subseteq \Delta$ and lattice homomorphisms $Q_1\colon C(\Delta,L_1) \longrightarrow C(K,L_1)$ and $Q_2\colon \ell_\infty(K,L_1) \longrightarrow \ell_\infty(K,L_1)$ such that $Q_2(Q_1(Z))$ is a Banach lattice isomorphic to $C(\Delta)$. Since $C[0,1]$ is lattice embeddable into $C(\Delta)$, the conclusion will follow from Lemma \ref{LemmEmbeddabilityC[0,1]}.
	We shall split  the proof into five steps for simplicity. 
	
	\bigskip
	\textbf{Step 1. Writing $C(\Delta)$ as the closure of an increasing sequence of finite dimensional sublattices.}

	We identify $\Delta$ with $2^\omega$ with the product topology. By $2^{<\omega}$ we denote the family of all functions $\sigma \colon \{1,2,\ldots,n\} \longrightarrow \{0,1\}$ for some $n < \omega$. For any such function $\sigma$ we write  $\supp(\sigma)=\{1,2,\ldots,n\}$ and $|\sigma|=n$. Moreover, for any $i \in \{0,1\}$ we denote by $\sigma ^\frown i$ the map defined on $\{1,\ldots,n,n+1\}$ which coincides with $\sigma$ on $\supp(\sigma)$ and takes the value $i$ on $n+1$. For each $\sigma, \sigma' \in 2^{<\omega}$ we write  $\sigma \preceq \sigma '$ if $\sigma'$ extends $\sigma$ (that is, if $\supp(\sigma)\subset \supp(\sigma')$ and $\sigma'|_{\supp(\sigma)}=\sigma$). Thus, $2^{<\omega}$ with this order is a partially ordered set with a least element denoted by $\emptyset$, which is the only function whose domain is the empty set.

	Set $\Delta_\sigma=\{ t\in \Delta: t|_{\supp(\sigma)}=\sigma \}$ and $f_\sigma =\chi_{\Delta_\sigma}$ its characteristic function for every $\sigma \in 2^{<\omega}$. Since each $\Delta_\sigma $ is clopen we have that each $f_\sigma$ is a positive norm-one continuous function in $C(\Delta)$.
	Moreover, $f_\sigma=f_{\sigma^\frown 0}+f_{\sigma^\frown 1}$ and $f_{\sigma^\frown 0},~f_{\sigma^\frown 1}$ are pairwise disjoint for every $\sigma$. 
	It follows that $\{f_\sigma: |\sigma|=n\}$ are pairwise disjoint functions and that $f_\beta \in \spn\{f_\sigma: |\sigma|=n\}$ whenever $|\beta|\leq n$. Thus, the sublattice generated by $\{f_\sigma: |\sigma|\leq n\}$ is just the Banach lattice generated by $\{f_\sigma: |\sigma|=n\}$. Since the Banach lattice generated by finitely many pairwise disjoint vectors is just the linear span of these vectors, we conclude that the Banach lattice generated by $\{f_\sigma: |\sigma| \leq n\}$ is just $Y_n:=\spn\{f_\sigma: |\sigma|=n\}$.
	$Y_n$ is a Banach lattice of dimension $2^n$, lattice isometric to $\ell_\infty^{2^n}$. 
	If we take $Y=\bigcup_n Y_n$, then the lattice version of the Stone-Weierstrass Theorem (see, for instance, \cite[Theorem 2.1.1]{Meyer}) gives that $\overline{Y}=C(\Delta)$.
	
	\bigskip
	\textbf{Step 2. Construction of a lattice quotient $Q_1\colon  C(\Delta, L_1) \longrightarrow C(K,L_1)$ induced by the restriction map to a closed subset $K$ in $\Delta$.}
	
	Since $X\subseteq C(\Delta, L_1)$ and $T$ is a positive linear embedding, each $g_\sigma:=Tf_\sigma$ is a positive continuous function $g_\sigma \colon \Delta \longrightarrow L_1$. By hypothesis, $\|g_\sigma\|\geq 1-\varepsilon$ for each $\sigma \in 2^{<\omega}$.
	Let $K_\sigma:=\{t\in \Delta: \|g_\sigma(t)\| \geq 1-\varepsilon \}$. Since the function $t \mapsto \|g_\sigma(t)\|$ is continuous and $\|g_\sigma\| \geq 1-\varepsilon$, we get that $K_\sigma $ is a nonempty closed subset of $\Delta$ for every $\sigma \in 2^{<\omega}$. Furthermore, notice that $K_{\sigma^\frown 0}, K_{\sigma^\frown 1} \subseteq K_{\sigma}$ for every $\sigma$. Indeed, if $t\in K_{\sigma^\frown 0}$, then $\|g_{\sigma^\frown 0}(t)\|\geq 1-\varepsilon$ and since $g_{\sigma^\frown 0} \leq g_\sigma$ and both are positive, we have that $\|g_\sigma(t)\|\geq 1-\varepsilon$, so $t \in K_\sigma$. Moreover, $K_{\sigma^\frown 0} \cap K_{\sigma^\frown 1} =\emptyset$ since if $t\in K_{\sigma^\frown 0} \cap K_{\sigma^\frown 1}$ then, bearing in mind that the norm of $L_1$ is additive on the positive cone, $1\geq \|g_\sigma(t)\|=\|g_{\sigma^\frown 0}(t)+g_{\sigma^\frown 1}(t)\|=\|g_{\sigma^\frown 0}(t)\|+\|g_{\sigma^\frown 1}(t)\|\geq 2(1-\varepsilon)\geq \frac{3}{2}$, which yields a contradiction.
	Thus, we can take $K:= \bigcap_{k \in \N } (\bigcup_{|\sigma|=k}K_{\sigma})$, which is a nonempty closed subset of $\Delta$. 
	We set $Q_1\colon C(\Delta, L_1) \longrightarrow C(K,L_1)$ the lattice quotient given by the restriction map $Q_1f=f|_K$. Set $K_\sigma':=K_\sigma \cap K$ and $h_\sigma :=Q_1 g_\sigma$ for every $\sigma \in 2^{<\omega}$. We point out the following property, which will be used in the following step:
	\begin{equation}
	\label{Prop1}
	\int (h_\emptyset (s)(t)-h_\sigma (s)(t))dt \leq \varepsilon \medspace \mbox{ for every } \sigma \in 2^{<\omega} \mbox{ and every } s \in K_\sigma'.
	\end{equation}
	Indeed, $\int (h_\emptyset (s)(t)-h_\sigma (s)(t))dt=\|h_\emptyset(s)\|-\|h_\sigma(s)\|$ and (\ref{Prop1}) follows from the fact that $1-\varepsilon \leq  \|h_\sigma(s)\| \leq \|h_\emptyset(s)\| \leq 1$ for every $ s \in K_\sigma'$.

	\bigskip
	\textbf{Step 3. Construction of the lattice homomorphism $Q_2 \colon \ell_\infty(K,L_1) \longrightarrow \ell_\infty(K,L_1)$.}
	
	Notice that $h_\sigma(s)\in L_1$ is an equivalence class of functions for every $\sigma \in 2^{<\omega}$ and every $s \in K$. For simplicity, we identify each equivalence class $h_\sigma(s)$ with a representative, still denoted by $h_\sigma(s)$. We know that if $\sigma \preceq \sigma '$ then $h_\sigma(s) \geq h_{\sigma'}(s)$, which means that $h_\sigma(s)(t) \geq h_{\sigma'}(s)(t)$ for almost every $t \in [0,1]$. 
	There is no loss of generality in assuming that the representatives have been taken in such a way that the inequality $h_\sigma(s)(t) \geq h_{\sigma'}(s)(t)$ holds for every $t \in [0,1]$.
	
	For each $\sigma \in 2^{<\omega}$ and $s \in K_\sigma'$  define
	$$ L_\sigma(s):= \{t \in [0,1]: h_\sigma(s)(t) \geq 2(h_\emptyset (s)(t)-h_\sigma(s)(t))\}.$$
	We prove the following inequality
	\begin{equation}
	\label{Prop2}
	\int_{L_\sigma(s)} h_\emptyset (s)(t)dt \geq 1-4 \varepsilon \medspace \mbox{ for every } s \in K_\sigma', ~\sigma \in 2^{<\omega}.	
	\end{equation}
	
	Indeed, 
	$$ \int_{L_\sigma(s)} h_\emptyset (s)(t)dt = \int_{[0,1]}h_\emptyset(s)(t)dt - \int_{L_\sigma(s)^c} h_\emptyset (s)(t)dt $$
	$$= \|h_\emptyset(s)\| - \int_{L_\sigma(s)^c}( h_\sigma (s)(t) + (h_\emptyset (s)(t)-h_\sigma (s)(t)))dt $$
	$$ \overset{(*)}{\geq} \|h_\emptyset(s)\|- \int_{L_\sigma(s)^c} 3(h_\emptyset (s)(t)-h_\sigma (s)(t))dt \overset{(**)}{\geq} 1-4\varepsilon,$$ 
	where in $(*)$ we have used the definition of $L_\sigma(s)$ and in $(**)$ we have used (\ref{Prop1}) and the fact that $\|h_\emptyset(s)\|\geq 1-\varepsilon$.
	
	Notice that if $\sigma \preceq \sigma '$ and $s \in K_{\sigma'}'$ then $L_{\sigma'}(s) \subseteq L_{\sigma}(s)$, since if $t \in L_{\sigma'}(s)$ then
	$$ h_\sigma(s)(t) \geq h_{\sigma'}(s)(t) \geq 2(h_\emptyset (s)(t)-h_{\sigma'}(s)(t))\geq 2(h_\emptyset (s)(t)-h_{\sigma}(s)(t)).$$
	
	We define for every $s\in K$ the set 
	
	$$ L(s):= \bigcap_{\sigma :~ s\in K_\sigma'} L_\sigma(s).$$ 
	
	Notice that for every $s\in K$ there exists a unique $\sigma_n$ with $|\sigma_n|=n$ such that $s \in K_{\sigma_n}'$. Thus, $L(s)=\bigcap_n L_{\sigma_n}(s)$ and since this is a decreasing sequence of sets, we have that
	\begin{equation}
	\label{Prop3}
	\int_{L(s)} h_\emptyset (s)(t)dt= \int h_\emptyset (s)(t) \chi_{L(s)}(t)dt= \lim_{|\sigma|\rightarrow \infty, s\in K_\sigma'}  \int h_\emptyset (s)(t) \chi_{L_\sigma(s)}(t)dt\geq 1-4\varepsilon	
	\end{equation}
  for every $s \in K$,	by the Monotone Convergence Theorem, where in the last inequality we have used (\ref{Prop2}).

	We consider now the map $Q_2\colon \ell_\infty(K,L_1) \longrightarrow \ell_\infty(K,L_1)$ given by the formula
	$(Q_2f)(s)=f(s)\chi_{L(s)}$ for every $s \in K$. It is easily checked that $Q_2$ is a lattice homomorphism of norm 1 with closed range. The range consists of the functions whose value at each $s$ is supported on $L(s)$.
	Since $C(K,L_1)$ is a sublattice of $\ell_\infty(K,L_1)$, we can define
	$x_\sigma=Q_2 h_\sigma$ for every $\sigma \in 2^{<\omega}$.

	\bigskip

	\textbf{Step 4. Construction of a copy of $C(\Delta)$ in $Q_2(Q_1(X))$.}
	
	We consider the sequence $(y_\sigma)_{\sigma \in 2^{<\omega}}$ in $Q_2(Q_1(X))$ defined recursively as follows:
	\begin{itemize}
		\item $y_\emptyset=x_\emptyset$;
		\item $y_{\sigma^\frown 0} = (2x_{\sigma^\frown 0}-x_{\sigma^\frown 1})^+ \wedge y_\sigma$.
		\item $y_{\sigma^\frown 1} = (2x_{\sigma^\frown 1}-x_{\sigma^\frown 0})^+ \wedge y_\sigma$.
	\end{itemize}
	
	It is immediate that every $y_\sigma$ is positive. We prove the following claims:
	
	\noindent\textbf{Claim 1. $\supp(y_\sigma)\subset K_\sigma'$ and therefore $y_{\sigma^\frown 0}\wedge y_{\sigma^\frown 1}=0$.}

	The claim is trivial for $\sigma=\emptyset$. 
	Suppose the claim holds for $\sigma$ and we show that it is true for $\sigma^\frown 0$ (the case $\sigma^\frown 1$ is analogous).
	By the induction hypothesis, $\supp(y_{\sigma^\frown 0}) \subseteq \supp(y_\sigma) \subseteq K_\sigma'=K_{\sigma^\frown 0}' \bigcup K_{\sigma^\frown 1}'$, so we only need to show that if $s \in K_{\sigma^\frown 1}'$ then $y_{\sigma^\frown 0}(s)(t)=0$ for every $t \in [0,1]$. By the definition of $Q_2$, it is immediate that if $t \notin L(s)$ then  $y_{\sigma^\frown 0}(s)(t)=0$. Nevertheless, since $s\in K_{\sigma^\frown 1}'$, if $t \in L(s)\subseteq L_{\sigma^\frown 1}(s)$ then 
	$$h_{\sigma^\frown 1}(s)(t) \geq 2(h_\emptyset(s)(t)-h_{\sigma^\frown 1}(s)(t) )\geq 2 h_{\sigma^\frown 0}(s)(t).$$
	But then $x_{\sigma^\frown 1}(s)(t) \geq 2 x_{\sigma^\frown 0}(s)(t)$ and therefore $y_{\sigma^\frown 0}(s)(t)=(2x_{\sigma^\frown 0}(s)(t)-x_{\sigma^\frown 1}(s)(t))^+ \wedge y_\sigma=0 \wedge( y_\sigma(s)(t))=0$ as desired.
	
	\noindent \textbf{Claim 2. $y_{\sigma}(s)=x_\emptyset(s)$ for every $\sigma\in 2^{<\omega}$, $s\in K_\sigma'$.}
	The result is trivial for $\sigma=\emptyset$. Suppose the claim holds for a fixed $\sigma$. We show that it holds for $\sigma^\frown 0$ (the case $\sigma^\frown 1$ is analogous).
	It is clear that $y_{\sigma^\frown 0}\leq x_\emptyset$, hence we prove the converse inequality.
	Notice that if $s \in K_{\sigma^\frown 0}'$ and $t\in L_{\sigma^\frown 0}(s)$ then
	$$ h_{\sigma^\frown 0}(s)(t)\geq 2(h_\emptyset(s)(t)-h_{\sigma^\frown 0}(s)(t))=(h_\emptyset(s)(t)-h_{\sigma^\frown 0}(s)(t))+(h_\emptyset(s)(t)-h_{\sigma^\frown 0}(s)(t))\geq $$
	$$ \geq (h_\emptyset(s)(t)-h_{\sigma^\frown 0}(s)(t))+ h_{\sigma^\frown 1}(s)(t), $$
	where in the last inequality we have used that $$ (h_\emptyset-h_{\sigma^\frown 0})=\sum_{\sigma'\neq \sigma^\frown 0, ~|\sigma'|=|\sigma|+1} h_{\sigma'} \geq h_{\sigma^\frown 1}.$$
	
	Thus, we obtain that 
	$$ 2 h_{\sigma^\frown 0}(s)(t) - h_{\sigma^\frown 1}(s)(t) \geq h_\emptyset(s)(t),$$
	for $s \in K_{\sigma^\frown 0}'$ and $t\in L_{\sigma^\frown 0}(s)$. By the definition of $Q_2$ and $y_{\sigma^\frown 0}$ we get that
	$$ y_{\sigma^\frown 0}(s)(t)=( 2 h_{\sigma^\frown 0}(s)(t) - h_{\sigma^\frown 1}(s)(t)) \wedge y_\sigma(s)(t) \geq h_\emptyset(s)(t) \wedge y_\sigma(s)(t) $$
	and the conclusion follows from the induction hypothesis.
	
	\bigskip
	
	The following claim is a direct consequence of Claims 1 and 2, where $\chi_{K_\sigma'}: K \longrightarrow L_1$ is the function which is null on $K\setminus {K_\sigma'}$ and is the constant function one on ${K_\sigma'}$.

	\noindent \textbf{Claim 3. $y_\sigma =\chi_{K_\sigma'} x_\emptyset $ and therefore $y_{\sigma}=y_{\sigma^\frown 0}+y_{\sigma^\frown 1}$ for every $\sigma\in 2^{<\omega}$.}

	\noindent \textbf{Claim 4. $1 \geq \|y_\sigma \| \geq 1-4\varepsilon$ for every $\sigma$.}
	The inequality $1 \geq \|y_\sigma \|$ is immediate, whereas the inequality $\|y_\sigma \| \geq 1-4\varepsilon$ follows from Claim 3 and (\ref{Prop3}).

	Finally, by Claims 1 and 3 we get that the sublattice generated by $\widehat Y_n:=\{y_\sigma: |\sigma|\leq n\}$ is just the Banach lattice generated by $\{y_\sigma: |\sigma|=n\}$. As happened at Step 1,  this Banach lattice is just the linear span of these vectors, i.e. $\widehat Y_n=\spn\{y_\sigma: |\sigma|=n\}$. In fact, if we denote by $\widehat Y$ the Banach lattice generated by $\{y_\sigma: \sigma \in 2^{<\omega}\}$, then it follows from Claim 4 that the unique operator $\widehat T\colon C(\Delta) \longrightarrow \widehat Y$
	satisfying $\widehat T(f_\sigma)=y_\sigma$ is a well-defined lattice isomorphism onto $\widehat Y$.
	\bigskip
	
	\textbf{Step 5. Embedding $C[0,1]$ into $X$.}
	
	Consider the sequence $(z_\sigma)_{\sigma \in 2^{<\omega}}$ in $X$ defined recursively as follows:
	\begin{itemize}
		\item $z_\emptyset=g_\emptyset$;
		\item $z_{\sigma^\frown 0} = (2g_{\sigma^\frown 0}-g_{\sigma^\frown 1})^+ \wedge z_\sigma$.
		\item $z_{\sigma^\frown 1} = (2g_{\sigma^\frown 1}-g_{\sigma^\frown 0})^+ \wedge z_\sigma$.
	\end{itemize}
	
	Since $x_\sigma= Q_2h_\sigma=Q_2(Q_1g_\sigma)$ for every $\sigma \in 2^{<\omega}$, it is immediate that $Q_2(Q_1(z_\sigma))=y_\sigma$  for every $\sigma \in 2^{<\omega}$. Let $Z$ be the sublattice of $X$ generated by $(z_\sigma)_{\sigma \in 2^{<\omega}}$.
	Then $Q_2(Q_1(Z))\subseteq \widehat Y$. We are going to show that $Q_2(Q_1(Z))= \widehat Y$. Let $n\geq 2$ and $a_\sigma $ be a family of scalars with $|\sigma| = n$.
	On one hand, $$\left| \sum_{|\sigma|= n} a_\sigma z_\sigma \right| \leq \sum_{|\sigma|= n} |a_\sigma| z_\sigma  \leq  \sup_{|\sigma|= n} |a_\sigma| \sum_{|\sigma|= n} z_\sigma  \leq  \sup_{|\sigma|= n} |a_\sigma| \sum_{|\sigma|= n-1} (z_{\sigma^\frown 0} + z_{\sigma^\frown 1} ) \leq$$
	$$ \leq   \sup_{|\sigma|= n} |a_\sigma| \sum_{|\sigma|= n-1} ( (2g_{\sigma^\frown 0}-g_{\sigma^\frown 1})^+ +  (2g_{\sigma^\frown 1}-g_{\sigma^\frown 0})^+) \leq   \sup_{|\sigma|= n} |a_\sigma| \sum_{|\sigma|= n-1} ( 2g_{\sigma^\frown 0} +  2g_{\sigma^\frown 1})=$$
	$$= 2 \sup_{|\sigma|= n} |a_\sigma| \sum_{|\sigma|= n} g_{\sigma} =  2 \sup_{|\sigma|= n} |a_\sigma| g_\emptyset .$$
	Thus, $$ \norm{\sum_{|\sigma|= n} a_\sigma z_\sigma} \leq 2 \sup_{|\sigma|= n} |a_\sigma|\| g_\emptyset \| \leq 2 \sup_{|\sigma|= n} |a_\sigma|.$$
	On the other hand,
	by Claims 3 and 4 we have that 
	$$ (1-4\varepsilon)\sup_{|\sigma|= n} |a_\sigma| \leq \norm{\sum_{|\sigma|= n} a_\sigma y_\sigma} \leq \sup_{|\sigma|= n} |a_\sigma|.$$
	
%	Now, since the family of all linear combinations of the previous form $\{\sum_{|\sigma|= n} a'_\sigma y_\sigma : n\geq 2,~\sup_{|\sigma|= n} |a'_\sigma|\leq 1\}$ is dense in the unit ball of $\widehat{Y}$, 
	Hence, it follows from the previous inequalities that $B_{\widehat{Y}} \subseteq \frac{2}{1-4\varepsilon} \overline{Q_2(Q_1(B_{Z}))}$.
	Then, by the classical Banach Open Mapping Theorem (see, for instance, \cite[Lemma 2.23]{fab}), we conclude that $Q_2(Q_1(Z))=\widehat{Y}$ as desired.

	Let $J=\ker((Q_2 \circ Q_1)|_Z)$, which is a closed ideal in $Z$. Then, $Z/J$ is lattice isomorphic to $\widehat Y$, which is in turn lattice isomorphic to $C(\Delta)$ by Step 4. Any surjection $\pi \colon  \Delta \longrightarrow [0,1]$ induces a lattice embedding $S\colon C[0,1] \longrightarrow C(\Delta)$ through the formula $Sf=f\circ \pi$ for every $f \in C[0,1]$. Thus, $C[0,1]$ is lattice embeddable into $C(\Delta)$ and therefore it is also lattice embeddable into $Z/J$. By Lemma \ref{LemmEmbeddabilityC[0,1]}, $C[0,1]$ is lattice embeddable into $Z \subseteq X$, which concludes the proof.
\end{proof}

As a consequence of Theorem \ref{TheoA}, \cite[Theorem 2]{LotzRosenthal78} and \cite[Theorem (A)]{Ghoussoub83}, we get the following corollary:

\begin{cor}
Let $X$ be a Banach lattice. Then the following assertions are equivalent:
\begin{enumerate}
	\item $C[0,1]$ is linearly embeddable into $X$;
	\item $C[0,1]$ is lattice embeddable into $X$;
	\item $L_1$ is lattice embeddable into $X^*$;
	\item $C(\Delta)^*$ is lattice embeddable into $X^*$;
	\item There is an order bounded sequence in $X$ with no weak Cauchy subsequence.
\end{enumerate}
\end{cor}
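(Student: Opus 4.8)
The plan is to obtain the corollary purely by chaining the three quoted results, routing all the equivalences through the auxiliary condition $(\ast)$: \emph{there is a positive linear embedding $T\colon C(\Delta)\longrightarrow X$}. First, $(1)\Leftrightarrow(2)$ is exactly Theorem~\ref{TheoA} applied to $X$ (the implication $(2)\Rightarrow(1)$ being trivial, as a lattice embedding is in particular a linear embedding). For $(1)\Leftrightarrow(\ast)$: by Ghoussoub's theorem \cite[Theorem (A)]{Ghoussoub83}, $(\ast)$ holds if and only if $C(\Delta)$ is linearly embeddable into $X$, and by Miljutin's theorem \cite[Theorem 21.5.10]{semadeni} the latter is equivalent to the linear embeddability of $C[0,1]$ into $X$, that is, to $(1)$. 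It is essential to pass through the \emph{linear}, not the lattice, isomorphism type here, because $C(\Delta)$ is not lattice isomorphic to any sublattice of $C[0,1]$ (cf.\ Lemma~\ref{Peanoproposition}); this is precisely why $(\ast)$, and not a lattice embedding of $C(\Delta)$, is the appropriate intermediate condition.

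The remaining equivalences $(\ast)\Leftrightarrow(3)\Leftrightarrow(4)\Leftrightarrow(5)$ are supplied by the Lotz--Rosenthal theorem \cite[Theorem 2]{LotzRosenthal78}. As a check on the least standard implication, $(\ast)\Rightarrow(5)$ can be seen by hand: the Rademacher functions $r_n(t)=(-1)^{t(n)}$ in $C(\Delta)$ are isometrically equivalent to the $\ell_1$ unit vector basis, hence have no weak Cauchy subsequence, and satisfy $|r_n|=\mathbf{1}$ for every $n$, so $(r_n)_n$ is order bounded; if $T\colon C(\Delta)\longrightarrow X$ is a positive linear embedding, then $|Tr_n|\le T|r_n|=T\mathbf{1}$, so $(Tr_n)_n$ is order bounded in $X$, and since $T$ is an isomorphism onto its range a sequence $(f_n)_n$ in $C(\Delta)$ is weak Cauchy if and only if $(Tf_n)_n$ is weak Cauchy in $X$ (transport functionals through $T^{-1}$ and Hahn--Banach), so $(Tr_n)_n$ likewise has no weak Cauchy subsequence. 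The converse $(5)\Rightarrow(3)$ --- producing a lattice copy of $L_1$ inside $X^*$ out of an order interval of $X$ that fails to be weakly precompact --- is the substantial half of \cite[Theorem 2]{LotzRosenthal78}, which we would invoke rather than reprove.

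Putting the pieces together yields $(1)\Leftrightarrow(2)\Leftrightarrow(\ast)\Leftrightarrow(3)\Leftrightarrow(4)\Leftrightarrow(5)$, which is the corollary. I do not anticipate any genuinely new difficulty --- this is why it is stated as a corollary --- and the one point requiring care is the bookkeeping highlighted above: Theorem~\ref{TheoA} is about $C[0,1]$, Ghoussoub's theorem delivers a linear (not lattice) embedding of $C(\Delta)$, and Miljutin's theorem identifies $C(\Delta)$ with $C[0,1]$ only linearly, so throughout the argument the ``lattice'' statements must stay attached to $C[0,1]$, with $(\ast)$ acting as the bridge to the linear isomorphism class.
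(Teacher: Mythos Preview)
Your proposal is correct and follows exactly the route the paper intends: the corollary is stated as an immediate consequence of Theorem~\ref{TheoA}, \cite[Theorem 2]{LotzRosenthal78}, and \cite[Theorem (A)]{Ghoussoub83}, and you have simply unpacked the chaining (via Miljutin's theorem for $(1)\Leftrightarrow$ linear embeddability of $C(\Delta)$, as the paper itself does earlier) that the authors leave implicit. Your extra direct verification of $(\ast)\Rightarrow(5)$ via the Rademacher functions is a pleasant sanity check but not needed, since condition~(5) is one of the equivalent conditions listed in \cite[Theorem~2]{LotzRosenthal78}.
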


The following corollary is a simple consequence of Theorem \ref{TheoA}, \cite[Theorem II.2]{GhoussoubJohnson87} and the fact that $\ell_1$ embeds complementably in a Banach lattice if and only if it is lattice embeddable \cite[Theorem 4.69]{AB}.
\begin{cor}\label{cor:dichot}
Let $X$ be a Banach lattice such that $\ell_1$ is linearly embeddable in $X$. Then $\ell_1$ is lattice embeddable in $X$ or $C[0,1]$ is lattice embeddable in $X$.	
\end{cor}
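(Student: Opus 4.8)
The plan is to deduce the statement directly from a known dichotomy for copies of $\ell_1$ inside Banach lattices, combined with Theorem \ref{TheoA}. First I would invoke \cite[Theorem II.2]{GhoussoubJohnson87}, which asserts that whenever $\ell_1$ is linearly embeddable in a Banach lattice $X$, then either $\ell_1$ embeds as a complemented subspace of $X$, or $C(\Delta)$ (equivalently, by Miljutin's Theorem, $C[0,1]$) is linearly embeddable in $X$. So, starting from the hypothesis that $\ell_1$ is linearly embeddable in $X$, this splits the argument into two cases.

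In the first case, $\ell_1$ embeds complementably in $X$; by \cite[Theorem 4.69]{AB} this already implies that $\ell_1$ is lattice embeddable in $X$, and we are done. In the second case, $C[0,1]$ is linearly embeddable in $X$, so by Theorem \ref{TheoA} it is in fact lattice embeddable in $X$, which again gives the desired conclusion. Putting the two cases together yields the corollary.

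The only point requiring care is to check that \cite[Theorem II.2]{GhoussoubJohnson87} is stated (or can be read off) in precisely the dichotomous form used above; modulo that, there is no genuinely hard step, and no further computation is needed beyond citing the three ingredients in this order.
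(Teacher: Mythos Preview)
Your proposal is correct and follows exactly the paper's own argument: the paper states the corollary as a simple consequence of Theorem~\ref{TheoA}, \cite[Theorem II.2]{GhoussoubJohnson87}, and \cite[Theorem 4.69]{AB}, combined precisely in the way you describe. Your caveat about the exact formulation of \cite[Theorem II.2]{GhoussoubJohnson87} is the only thing to verify, and it does indeed yield the required dichotomy.
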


%\begin{rem}
%It is a classical result in Banach lattice theory (see e.g. \cite[Theorem 1.c.5]{LinTza12}) that, given a Banach lattice $X$, then $X$ satisfies one of the following assertions:
%\begin{itemize}
%\item $X$ is reflexive.
%\item $X$ contains a subspace isomorphic to $c_0$.
%\item $X$ contains a subspace isomorphic to $\ell_1$. 
%\end{itemize}
%As a consequence of Corollary \ref{cor:dichot} and Theorem \ref{theo:carac0intro} we can give a dichotomy version for lattice embeddings, namely, given a Banach lattice $X$, then $X$ satisfies one of the following assertions:
%\begin{itemize}
%\item $X$ is reflexive.
%\item $c_0$ is lattice embeddable in $X$.
%\item $\ell_1$ is lattice embeddable in $X$.
%\item $C([0,1])$ is lattice embeddable in $X$.
%\end{itemize}
%\end{rem}

\section{Lattice embeddability of other Banach lattices}
\label{SectionEmbeddability}

We wonder in this section what other Banach lattices, different from $c_0$ and $C[0,1]$, satisfy the property exhibited in Theorem \ref{TheoA}. Before giving a complete characterization in the separable case, we prove Theorem \ref{TheoC}, which will be the key ingredient in the proof of Theorem \ref{TheoB}.

\begin{proof}[Proof of Theorem \ref{TheoC}] Let $X$ be an infinite-dimensional sublattice of $C[0,1]$. Suppose that $X$ is not isomorphic to $c_0$. By \cite[Lemma 2.7.12]{Meyer}%\cite[Theorem III.3]{GhoussoubJohnson87} (\textcolor{blue}{is there a better reference?})
, $X$ does not have order continuous norm. By \cite[Theorem 2.4.2]{Meyer}, there exists a disjoint sequence $(g_n)_{n\in \N}$ and $g$ in $X^+$ with $g$ an upper bound for the sequence $g_n$ and such that $(g_n)_{n\in \N}$ is not norm-convergent to zero. Without loss of generality, we can suppose $\|g_n\|=1$ for every $n\in \N$. Let $I_n$ be a connected component of the open set $\{t\in [0,1]: g_n(t)>0\}$ where $g_n$ attains its maximum. Then, $I_n=(a_n,b_n)$ is an open interval and there is $c_n \in (a_n,b_n)$ such that $g_n(c_n)=1$. Since $(g_n)_{n\in \N}$ is a disjoint sequence, the intervals $I_n$ are disjoint. Thus, the length of $I_n$ converges to zero. By the uniform continuity of $g$, there exists $n\in \N$ such that the length of the interval $g(I_n)$ is smaller than $\frac{1}{2}$. In particular, since $g \geq g_n$ and $g_n(c_n)=1$, we have that $g(t)\geq \frac{1}{2}$ for every $t \in \overline{I_n}$. We claim that $C[0,1]$ is lattice embeddable into the sublattice $Y$ generated by $g$ and $g_n$.

Let $\{(t_\alpha,t_\alpha',\lambda_\alpha)\}_{\alpha \in \Gamma}$ be the family of all elements $t_\alpha,t_\alpha',\lambda_\alpha \in [0,1]$ such that $g(t_\alpha)=\lambda_\alpha g(t_\alpha')$ and $g_n(t_\alpha)=\lambda_\alpha g_n(t_\alpha')$ for every $\alpha \in \Gamma$. By \cite[Theorem 3]{Kakutani}, 
$$Y=\{f \in C[0,1]: f(t_\alpha)=\lambda_\alpha f(t_\alpha') \mbox{ for every }\alpha\in \Gamma \}.$$
Define $T\colon C[0,1] \longrightarrow Y$ by the formula $$Tf(t)=f\left( \frac{g_n(t)}{g(t)} \right) g(t) \mbox{ whenever } t\in [0,1] \mbox{ and } g(t)\neq 0,$$
and $Tf(t)=0$ otherwise. It is immediate that $Tf \in C[0,1]$ for every $f \in C[0,1]$. A routine computation shows that $Tf \in Y$ for every $f \in C[0,1]$. Thus, $T$ is a lattice homomorphism with $\|T\| \leq \|g\|$.
Let $M:=\sup_{t \in I_n} \frac{g_n(t)}{g(t)} $ (notice that $0<M\leq 1$) and define 
$$Z= \{f \in C[0,1]: f \mbox{ is constant on the interval } \left[M, 1\right] \},$$
which is clearly lattice isomorphic to $C[0,1]$.
We claim that $T|_Z$ is an isomorphism. Indeed, if $f\in Z$ then
$$ \|Tf\|=\sup_{t\in [0,1]} |Tf(t)| = \sup_{t\in [0,1]\setminus g^{-1}(0)}\left|f\left( \frac{g_n(t)}{g(t)} \right)\right| g(t) \geq \sup_{t \in I_n } \left|f\left( \frac{g_n(t)}{g(t)} \right)\right| g(t) \geq$$
$$ \geq \frac{1}{2} \sup_{t \in I_n } \left|f\left( \frac{g_n(t)}{g(t)} \right)\right| =  \frac{1}{2} \|f\|, $$
where the last equality follows from the definition of $Z$ and $M$ and the fact that $\left[0,M\right]= \overline{\{\frac{g_n(t)}{g(t)} : t \in I_n\}}$, by definition of $I_n$.
Thus, $T|_Z$ yields a lattice embedding from $Z$ into $Y \subseteq X$, which concludes the proof.
\end{proof}

\bigskip

\begin{proof}[Proof of Theorem \ref{TheoB}] 
Suppose first that $X$ satisfies (1). Then, since every separable Banach space is linearly embeddable into $C[0,1]$, it follows that $X$ is lattice embeddable into $C[0,1]$.

We show now that (2) implies (1). We do know that (1) holds if $X$ is lattice isomorphic to $c_0$ or finite-dimensional. We suppose then that $X$ is an infinite-dimensional sublattice of $C[0,1]$ not lattice isomorphic to $c_0$. By Theorem \ref{TheoC}, $C[0,1]$ is lattice embeddable into $X$. Let $Y$ be an arbitrary Banach lattice containing an isomorphic copy of $X$. Then, it contains an isomorphic copy of $C[0,1]$, so by Theorem \ref{TheoA}, $C[0,1]$ is lattice embeddable into $Y$. Nevertheless, since $X$ is lattice embeddable into $C[0,1]$ we conclude that $X$ is lattice embeddable into $Y$ as desired. 
\end{proof}

After Theorem \ref{TheoB}, one may wonder what are the Banach sublattices of $C[0,1]$. For $C(K)$ spaces we can give a characterization. Recall that a compact space is said to be a \textit{Peano compactum} if it is a continuous image of the interval $[0,1]$.
\begin{prop}
	\label{Peanoproposition}	
	For a compact space $K$, $C(K)$ is lattice embeddable into $C[0,1]$ if and only if $K$ is a disjoint finite union of Peano compacta.
\end{prop}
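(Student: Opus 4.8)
I would prove the two implications separately, and expect the work to be almost entirely in the ``only if'' direction.

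\emph{Sufficiency.} Suppose $K=K_1\sqcup\cdots\sqcup K_m$ with each $K_j$ a continuous image of $[0,1]$, say $\pi_j\colon[0,1]\twoheadrightarrow K_j$, so that $C(K)=C(K_1)\oplus\cdots\oplus C(K_m)$ with the sup norm and $\|f\|=\max_j\|f_j\|$ where $f_j=f|_{K_j}$. I would split $[0,1]$ into $m$ closed ``block'' intervals $I_1,\dots,I_m$ (in this order, with $\min I_1=0$, $\max I_m=1$) separated by $m-1$ equal ``gap'' intervals, fix increasing affine homeomorphisms $\phi_j\colon I_j\to[0,1]$, put $\rho_j=\pi_j\circ\phi_j$, $p_j=\rho_j(\max I_j)$, $q_j=\rho_j(\min I_j)$, and define $T\colon C(K)\to C[0,1]$ by $(Tf)(x)=f_j(\rho_j(x))$ for $x\in I_j$, and, on the gap between $I_j$ and $I_{j+1}$ (parametrized affinely by $s\in[0,1]$), by $(Tf)(x)=(1-2s)\,f_j(p_j)$ for $s\le\tfrac12$ and $(Tf)(x)=(2s-1)\,f_{j+1}(q_{j+1})$ for $s\ge\tfrac12$. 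A routine check shows that $Tf$ is continuous (one-sided limits match at the block--gap junctions and at the gap midpoints), that $T$ is linear, and that $T$ is a lattice homomorphism (on a block, because restriction to a clopen set is a lattice homomorphism; on a gap, because $T$ is there multiplication of a single coordinate by a nonnegative continuous scalar). Finally $\|Tf\|=\max_j\|f_j\|=\|f\|$, since each $\rho_j$ is onto and the gap values are dominated by the block values; hence $T$ is a lattice isometry onto a closed sublattice of $C[0,1]$.

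\emph{Necessity.} Let $T\colon C(K)\to C[0,1]$ be a lattice embedding; after rescaling I may assume $a\|f\|\le\|Tf\|\le\|f\|$ for some $0<a\le 1$. Then $K$ is metrizable ($T(C(K))$ is separable). Set $u=T\mathbf 1\ge 0$ and $U=\{u>0\}$, an open set. For $x\in U$ the functional $f\mapsto(Tf)(x)$ is a nonzero lattice homomorphism $C(K)\to\R$, hence (classically) a positive multiple of a point evaluation; evaluating at $\mathbf 1$ gives $(Tf)(x)=u(x)\,f(k(x))$ for a unique $k(x)\in K$, and since $g(k(x))=(Tg)(x)/u(x)$ for every $g\in C(K)$, the map $k\colon U\to K$ is continuous. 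The key use of the lower estimate is: given $p\in K$, apply $\|Tf\|\ge a\|f\|$ to a norm-one bump $f_n$ with $f_n(p)=1$ supported in $B(p,1/n)$; since $(Tf_n)(x)=u(x)f_n(k(x))$ vanishes unless $k(x)\in B(p,1/n)$, there is $x_n\in U$ with $k(x_n)\in B(p,1/n)$ and $u(x_n)>a-1/n$. A subsequential limit $x^*$ of $(x_n)$ satisfies $u(x^*)\ge a>0$, so $x^*\in U$, and then $k(x^*)=p$. Thus, writing $W=\{u\ge a\}$ (a compact subset of $U$), the restriction $k|_W\colon W\to K$ is \emph{onto}.

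Next I would exploit the interval structure of $U$. Its connected components are intervals, and only finitely many, say $I_1,\dots,I_L$, can satisfy $\sup_{I_j}u\ge a$: otherwise pick $z_m$ in infinitely many distinct components with $u(z_m)\ge a$ (the supremum over a component is attained inside it, as $u$ vanishes at the endpoints of the component not belonging to it); a subsequential limit $z^*$ has $u(z^*)\ge a>0$, hence lies in $U$, hence in one component of $U$, forcing infinitely many $z_m$ into it --- a contradiction. Since each point of $W$ lies in a component of $U$ on which $u$ reaches $a$, we get $W\subseteq I_1\cup\cdots\cup I_L$; and for each $j$ the set $W\cap I_j$ is a nonempty compact subset of the interval $I_j$ (its closure cannot reach the endpoints of $I_j$ outside $I_j$, where $u=0<a$), so its convex hull $[\alpha_j,\beta_j]$ is a closed interval contained in $I_j\subseteq U$. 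Hence $k$ is continuous on $[\alpha_j,\beta_j]$, so $k([\alpha_j,\beta_j])$ is a Peano compactum, and
\[
K=k(W)=\bigcup_{j=1}^L k(W\cap I_j)\subseteq\bigcup_{j=1}^L k([\alpha_j,\beta_j])\subseteq K .
\]
Thus $K$ is a finite union of Peano compacta, hence a disjoint finite union of Peano compacta: it is compact, metrizable and locally connected (a finite union of Peano compacta is locally connected --- near a point glue small connected neighbourhoods from the finitely many pieces containing it and discard the finitely many that do not), and a compact locally connected space has clopen, hence finitely many, components, each of which is then a Peano continuum.

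The main obstacle --- and the only place where the lattice structure is genuinely exploited beyond producing the pointwise formula $(Tf)(x)=u(x)f(k(x))$ --- is the finiteness step: ``$K$ is a continuous image of a compact subset of $[0,1]$'' is true for \emph{every} compact metric space and therefore useless by itself. What rescues the argument is that $u=T\mathbf 1$ is one fixed continuous function bounded below by $a$ precisely on $W$, together with the fact that $k$ is continuous on the whole open set $U$ (not merely on $W$); these force only finitely many components of $U$ to matter and allow the possibly badly behaved compact set $W$ to be replaced by finitely many honest closed intervals $[\alpha_j,\beta_j]$ on which $k$ is a genuine parametrization.
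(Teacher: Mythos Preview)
Your proof is correct and follows essentially the same strategy as the paper's: both obtain the pointwise representation $(Tf)(x)=u(x)\,f(k(x))$ (you derive it from the classification of real lattice homomorphisms, the paper cites \cite[Theorem 3.2.10]{Meyer}), use the lower estimate to see that $k$ maps $\{u\ge c\}$ onto $K$, trap this compact set inside finitely many closed subintervals of $\{u>0\}$, and conclude that $K$ is a finite --- hence disjoint finite --- union of Peano compacta. Your version is more self-contained (you give an explicit construction for sufficiency, which the paper skips; you analyse the components of $\{u>0\}$ where the paper just takes a finite interval cover; and you pass to disjoint pieces via local connectedness and Hahn--Mazurkiewicz where the paper groups by path-connectedness), but these are differences in execution rather than in approach.
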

\begin{proof}
	We only prove that if $C(K)$ is lattice embeddable into $C[0,1]$ then $K$ is a finite union of Peano compacta, which is the nontrivial implication.
	Let $T\colon C(K) \longrightarrow C[0,1]$ be a lattice embedding. By \cite[Theorem 3.2.10]{Meyer}, we have continuous functions $u \colon [0,1] \longrightarrow [0,\infty)$ and $h\colon [0,1]\setminus u^{-1}(0) \longrightarrow K$ so that the embedding  $T \colon C(K) \longrightarrow C[0,1]$ is given by the formula
	$$T(f)(t) = u(t) f(h(t)) \mbox{ whenever } t\in  [0,1]\setminus u^{-1}(0) $$ and $T(f)(t)=0$ otherwise.
	We must have a constant $C>0$ such that  $\|f\| \leq  C \|Tf\|$ for every $f\in C(K)$.
	This implies that  $h(\{t : u(t)\geq \frac{1}{2C}\}) = K$, because otherwise, taking a function $f$ of norm $1$ that vanishes on $h(\{t : u(t) \geq \frac{1}{2C}\})$ we get a contradiction.
	
	There is a finite union $L$ of closed intervals such that $\{t: u(t)\geq \frac{1}{2C}\} \subseteq L \subseteq u^{-1}((0,\infty))$.
	Therefore $K$ is a finite union of Peano compacta. Consider the maximal unions of these finitely many Peano compacta that are path connected. They constitute a partition of $K$ into compact sets. Each of them is a Peano compactum, because it is easy to see that a path connected finite union of Peano compacta is a Peano compactum.
\end{proof}

We finish with a few comments concerning how a nonseparable Banach lattice satisfying condition (1) in Theorem \ref{TheoB} should look like.
Suppose that $X$ is a Banach lattice such that whenever it is linearly embeddable into a Banach lattice then it is lattice embeddable. There are two distinguished Banach lattices where $X$ linearly embeds in. The first one is $C(B_{X^*})$, with $B_{X^*}$ being the unit ball of $X^*$ endowed with the weak*-topology. Thus, $X$ is a sublattice of a $C(K)$-space or, equivalently, $X$ is  an AM-space by the classical Kakutani-Bohnenblust-Krein Theorem (see, for instance, \cite[Theorem 3.6]{AA}).
On the other hand, $X$ linearly embeds into the free Banach lattice generated by $X$, denoted by $FBL[X]$. This notion was introduced in \cite{ART18} and we recall its definition next:

\begin{defn}
Let $E$ be a Banach space. \textit{The free Banach lattice $FBL[E]$ generated by $E$} is a Banach lattice for which there is an isometry $\phi_E \colon E \longrightarrow FBL[E]$ such that for every Banach lattice $Y$ and every operator $T\colon E \longrightarrow Y$ there exists a lattice homomorphism $\widehat{T} \colon FBL[E] \longrightarrow Y$ such that $T=\widehat{T} \circ \phi_E$ and $\|T\|=\|\widehat{T}\|$.
\end{defn}

Thus, if $X$ is lattice embeddable into a Banach lattice whenever it is linearly embeddable then $X$ must be lattice embeddable into $FBL[X]$. One consequence of this fact is that $X$ must have the $\sigma$-bounded chain condition (see Theorem 1.2 and Definition 1.1 in \cite{APR}). In particular, $X$ cannot be $c_0(\Gamma)$ for any uncountable $\Gamma$.

These remarks and the results in this article motivate the following questions, for which we do not know the answer.
\begin{prob}\label{problemuncountablepower}
Is there a nonseparable Banach lattice $X$ such that $X$ is lattice embeddable in a Banach lattice whenever it is linearly embeddable? Does $C([0,1]^\Gamma)$ satisfy this property for some uncountable set $\Gamma$?
\end{prob}

By Theorem \ref{TheoB} and Propostion~\ref{Peanoproposition}, $C([0,1]^\Gamma)$ does have this property when $\Gamma$ is finite or countable, because the finite or countable powers of the interval are Peano compacta.  If we restrict our attention to embeddings inside $C(K)$-spaces, we may ask the following.

\begin{prob}\label{problemCKversion}
	If $C([0,1]^\Gamma)$ linearly embeds into $C(K)$, does $C([0,1]^\Gamma)$ lattice embed into $C(K)$?
\end{prob}

It follows from a result of Plebanek \cite[Corollary 4.6]{Plebanekpositiveembeddings} that there is a lattice embedding from $C([0,1]^\Gamma)$ into $C(K)$ if and only if there is a positive linear embedding, if and only if $K$ maps continuously onto $[0,1]^\Gamma$. Haydon \cite{Haydonl1tau}, Plebanek \cite{Plebanek97} and Fremlin \cite{Fremlin} found various conditions on the cardinality of $\Gamma$ that imply that if $\ell_1(\Gamma)$ linearly embeds into $C(K)$, then $K$ maps continuously onto $[0,1]^\Gamma$, which in turn ensures a positive answer to Problem \ref{problemCKversion}, cf. \cite{Plebaneksurvey}. The question for arbitrary uncountable $\Gamma$ seems however open.

On the other hand,  $C([0,1]^\Gamma)$ does lattice embed into $FBL[C([0,1]^\Gamma)]$, in fact as a complemented sublattice, cf. \cite[Proposition 2.2 and Corollary 3.8]{GKprinciple} and \cite[Theorem 1.4]{amr20_2}.

\section*{Acknowledgements}

We wish to thank Jos\'e Rodr\'iguez for some valuable suggestions.

Research partially supported by Fundaci\'{o}n S\'{e}neca [20797/PI/18]. The first three authors were also supported by project MTM2017-86182-P (Government of Spain, AEI/FEDER, EU). The research of G. Mart\'inez-Cervantes was co-financed by the European Social Fund and the Youth European Initiative under Fundaci\'on S\'eneca  [21319/PDGI/19]. The research of A. Rueda Zoca was also supported by Juan de la Cierva-Formaci\'on fellowship FJC2019-039973, by MICINN (Spain) Grant PGC2018-093794-B-I00 (MCIU, AEI, FEDER, UE), by Junta de Andaluc\'ia Grant A-FQM-484-UGR18 and by Junta de Andaluc\'ia Grant FQM-0185.
P.~Tradacete was also partially supported by Agencia Estatal de Investigaci\'on (AEI) and Fondo Europeo de Desarrollo Regional (FEDER) through grants MTM2016-76808-P (AEI/FEDER, UE) and MTM2016-75196-P (AEI/FEDER, UE), as well as Ministerio de Innovaci\'on y Ciencia, through ``Severo Ochoa Programme for Centres of Excellence in R\&D'' (CEX2019-000904-S).

\end{document}